\g@addto@macro\th@plain{\thm@headpunct{}}
\newtheorem{theorem}{Theorem}
\newtheorem{remark}{Remark}
\newcommand{\xx}{ {\textbf x} }
\newcommand{\ab}{ {\textbf a} }
\newcommand{\bb}{ {\textbf b} }
\newcommand{\cb}{ {\textbf c} }
\newcommand{\yy}{ {\textbf y} }
\newcommand{\zz}{ {\textbf z} }
\newcommand{\ee}{ {\textbf e} }
\newcommand{\ub}{ {\textbf u} }
\newcommand{\vb}{ {\textbf v} }
\newcommand{\VV}{ \Omega }
\newcommand{\RR}{\mathbb{R}}
\newcommand{\KK}{\mathbb{K}}
\newcommand{\LL}{\mathbb{L}}
\newcommand{\PP}{\mathbb{P}}
\newcommand{\En}{\mathbb{E}}
\newcommand{\DD}{\mathcal{D}}
\newcommand{\TT}{\mathcal{T}}
\newcommand{\Trace}{\mathrm{Trace}\,}
\newcommand{\DDet}{\mathrm{Det}}
\providecommand{\norm}[1]{\lVert#1\rVert}
\providecommand{\scalar}[1]{\left\langle#1\right\rangle}
\newcommand{\gw}{ {\mathfrak g} }
\newcommand{\ww}{ {\mathfrak w} }
\newcommand{\we}{ {\mathfrak w}_\ee }
\title[Characterization of beta distribution]{Characterization of beta distribution on symmetric cones}
\author[B. Ko\l{}odziejek]{Bartosz Ko\l{}odziejek}
\address{Faculty of Mathematics and Information Science\\Warsaw University of Technology\\Koszykowa 75\\00-661 Warsaw, Poland}
\email{kolodziejekb@mini.pw.edu.pl}
\keywords{Beta distribution; Beta-Riesz distribution; Characterization of probability distribution; Division algorithm; Symmetric cones; Fundamental equation of information; Functional equations}
\subjclass[2010]{Primary 39B52.}
\begin{document}

\begin{abstract}
In the paper we generalize the following characterization of beta distribution to the symmetric cone setting: let $X$ and $Y$ be independent, non-degenerate random variables with values in $(0,1)$, then $U=1-XY$ and $V=\frac{1-X}{U}$ are independent if and only if there exist positive numbers $p_i$, $i=1,2,3$, such that $X$ and $Y$ follow beta distributions with parameters $(p_1+p_3,p_2)$ and $(p_3,p_1)$, respectively.
\end{abstract}
\maketitle

\section{Introduction}
In the paper we generalize the following characterization of beta distribution to random matrices and, more generally, to random variables valued in the symmetric cone: \emph{let $X$ and $Y$ be independent, non-degenerate random variables with values in $(0,1)$, then $U=1-XY$ and $V=\frac{1-X}{U}$ are independent if and only if there exist positive numbers $p_i$, $i=1,2,3$, such that $X$ and $Y$ follow beta distributions with parameters $(p_1+p_3,p_2)$ and $(p_3,p_1)$, respectively}. This univariate result was proved in \cite{WesBeta} under additional assumptions that $X$ and $Y$ have densities, which are strictly positive on $(0,1)$ and are log-locally integrable.
Regularity assumption on densities was removed in the work of \cite{LajMe2009}. It turns out that the existence of densities assumption is redundant, what was shown in \cite{WesBeta2}.

Here we are interested in a generalization of density versions of the beta characterization, when random variables are valued in the cone $\VV_+$ of $r\times r$ positive definite symmetric real matrices.
Define the analogue of $(0,1)$ interval in $\VV_+$: $\DD_+=\left\{\xx\in\VV_+\colon I-\xx\in\VV_+ \right\}$, where $I$ is the identity matrix. 
Beta distribution on symmetric cone $\VV_+$ with parameters $(p,q)$ for $p,q>\dim\VV_+/r-1$ is defined by its density
\begin{align*}
\mathcal{B}(p,q)(\mathrm{d}\xx)=\frac{1}{\mathcal{B}_{\VV_+}(p,q)}(\det\xx)^{p-\dim\VV/r}\det(I-\xx)^{q-\dim\VV_+/r}I_{\DD_+}(\xx)\,\mathrm{d}\xx,\quad\xx\in\VV_+,
\end{align*}
where $\mathcal{B}_{\VV_+(p,q)}$ is the normalizing constant. For any $\xx\in\VV_+$ there exists unique $\yy\in\VV_+$ such that $\yy^2=\xx$. Matrix $\yy$ is denoted by $\yy=\xx^{1/2}$.
We will show that if $X$ and $Y$ are independent random variables valued in $\DD_+$, having continuous densities, which are strictly positive on $\DD_+$, then \mbox{$U=I-X^{1/2}\cdot Y\cdot X^{1/2}$} and \mbox{$V=U^{-1/2}\cdot(I-X)\cdot U^{-1/2}$} are independent if and only if there exist numbers $p_i>\dim\VV_+/r-1$, $i=1,2,3$, such that $X$ and $Y$ follow matrix-variate beta distribution with parameters $(p_1+p_3,p_2)$ and $(p_3,p_1)$, respectively. 

Actually, we will consider much more general form of transformation of random variables, which is defined through, so-called, multiplication algorithm. A multiplication algorithm is a mapping $w\colon\VV_+\mapsto GL(r,\RR)$ such that $w(\xx)\cdot w^\top(\xx)=\xx$ for any $\xx\in\VV_+$, where $GL(r,\RR)$ is the group of invertible $r\times r$ matrices and $w^\top(\xx)$ is the transpose of $w(\xx)$. Multiplication algorithms (actually their inverses called division algorithms) were introduced by \cite{OlRu1962} alongside the characterization of Wishart probability distribution (see also \cite{CaLe1996} for generalization to symmetric cone setting). The two basic examples of multiplication algorithms are $w_1(\xx)=\xx^{1/2}$ ($\xx^{1/2}$ being the unique positive definite symmetric square root of $\xx$) and $w_2(\xx)=t_\xx$, where $t_\xx$ is the lower triangular matrix from the Cholesky decomposition of $\xx=t_\xx\cdot t_\xx^\top$. 

We will consider the independence of $U=I-w(X)\cdot Y \cdot w^\top(X)$ and $V=(\widetilde{w}(U))^{-1}\cdot (I-X)\cdot (\widetilde{w}^\top(U))^{-1}$, where $w$ and $\widetilde{w}$ are two multiplication algorithms satisfying additionally some natural conditions. It turns out that, depending on the choice of multiplication algorithms, the characterized distribution may not be the beta distribution (see Theorem \ref{betathS}). For example, when $w=\widetilde{w}=w_2$ the condition of independence of $U$ and $V$ characterizes wider family of distributions called beta-Riesz, which include beta distribution as a special case. 

As in the famous Lukacs-Olkin-Rubin Theorem (see \cite{OlRu1964} for $\VV_+$ case and \cite{CaLe1996} for all symmetric cones) the assumption of invariance under the group of automorphisms of distributions of $X$ and $Y$ is considered. The distribution of $X$ is said to be invariant under the group of automorphisms if $O\cdot X\cdot O^\top\stackrel{d}{=}X$ for any orthogonal matrix $O$. This approach leads to a characterization of beta distribution regardless of the choice of multiplication algorithms (see Theorem \ref{betathK}).

We cannot give the explicit formula for densities for any multiplication algorithms. In general case, the densities are given in terms of, so-called, $w$-logarithmic Cauchy functions, that is, functions that satisfy the following functional equation 
\begin{align*}
f(\xx)+f(w(I)\cdot \yy\cdot w^\top(I))=f(w(\xx)\cdot\yy\cdot w^\top(\xx)),\quad (\xx,\yy)\in\VV_+.
\end{align*}
The form of $w$-logarithmic Cauchy functions without any regularity assumptions for two basic examples of multiplication algorithms were recently considered in \cite{wC2013}. Later on we will write $\ww(\xx)$ for the the linear operator acting on $\VV_+$ such that $\ww(\xx)\yy=w(\xx)\cdot \yy\cdot w^\top(\xx)$. $\ww(\xx)$ will also be termed a multiplication algorithm.

Analogous characterization of Wishart distribution, when densities of respective random variables are given in terms of $w$-logarithmic functions is given in \cite{BKLOR2014}. Unfortunately, we cannot answer the question whether there exists multiplication algorithm resulting in characterizing other distribution than beta or beta-Riesz. Moreover, the removal of the assumption of the existence of densities remains a challenge.
 
The idea of the proof is analogous to that of \cite{WesBeta}. The independence condition gives us the functional equation for densities, which is then solved.  
As was observed in \cite{LajMe2009}, in univariate case, the independence condition leads to the generalized fundamental equation of information, that is
\begin{align*}
F(x)+G\left(\tfrac{y}{1-x}\right)=H(y)+K\left(\tfrac{x}{1-y}\right),
\end{align*}
where $(x,y)\in \mathrm{D}_0=\left\{(x,y)\in(0,1)^2\colon x+y\in(0,1)\right\}$ and $F,G,H,K\colon(0,1)\to\RR$ are unknown functions.
Our proof will heavily rely on the solution to the generalization of this equation to the cone $\VV_+$, which was given in \cite{BKInf2014}.

Similar characterization of beta distribution for random matrices was proved under numerous additional assumptions in \cite{HaRe2009}. The characterization of $2\times2$ matrix-variate beta distribution was also given by \cite{BWBeta}, but the characterization condition was of a different nature.

All above considerations can be generalized to the symmetric cones, of which $\VV_+$ is the prime example. The paper is organized as follows. In the next section we give necessary introduction to the theory of symmetric cones. Next, in Section 3 we define beta and beta-Riesz probability distributions on symmetric cones. Main theorems are stated and proved in Section $4$. Section $5$ is devoted to the analysis of the problem, when $X$ and $Y$ have distributions invariant under the group of automorphisms.

\section{Preliminaries}
In this section we recall basic facts of the theory of symmetric cones, which are needed in the paper. For further details we refer to \cite{FaKo1994}. 

A \textit{Euclidean Jordan algebra} is a Euclidean space $\En$ (endowed with scalar product denoted $\scalar{\xx,\yy}$) equipped with a bilinear mapping (product)
\begin{align*}
\En\times\En \ni \left(\xx,\yy\right)\mapsto \xx\yy\in\En
\end{align*}
and a neutral element $\ee$ in $\En$ such that for all $\xx$, $\yy$, $\zz$ in $\En$:
\begin{enumerate}[(i)]
	\item $\xx\yy=\yy\xx$, 
	\item $\xx(\xx^2\yy)=\xx^2(\xx\yy)$,
	\item $\xx\ee=\xx$,
	\item $\scalar{\xx,\yy\zz}=\scalar{\xx\yy,\zz}$.
\end{enumerate}
For $\xx\in\En$ let $\LL(\xx)\colon \En\to\En$ be linear map defined by
\begin{align*}
\LL(\xx)\yy=\xx\yy,
\end{align*}
and define 
\begin{align*}
\PP(\xx)=2\LL^2(\xx)-\LL\left(\xx^2\right).
\end{align*} 
Let $\mathrm{End}(\En)$ denote the space of endomorphisms of $\En$. The map $\PP\colon \En\mapsto \mathrm{End}(\En)$ is called the \emph{quadratic representation} of $\En$.

An element $\xx$ is said to be \emph{invertible} if there exists an element $\yy$ in $\En$ such that $\LL(\xx)\yy=\ee$. Then $\yy$ is called the \emph{inverse of} $\xx$ and is denoted by $\yy=\xx^{-1}$. Note that the inverse of $\xx$ is unique. It can be shown that $\xx$ is invertible if and only if $\PP(\xx)$ is invertible and in this case $\left(\PP(\xx)\right)^{-1} =\PP\left(\xx^{-1}\right)$.

A Euclidean Jordan algebra $\En$ is said to be \emph{simple} if it is not a \mbox{Cartesian} product of two Euclidean Jordan algebras of positive dimensions. Up to linear isomorphism there are only five kinds of Euclidean simple Jordan algebras. Let $\mathbb{K}$ denote either the real numbers $\RR$, the complex ones $\mathbb{C}$, quaternions $\mathbb{H}$ or the octonions $\mathbb{O}$, and write $S_r(\mathbb{K})$ for the space of $r\times r$ Hermitian matrices valued in $\mathbb{K}$, endowed with the Euclidean structure $\scalar{\xx,\yy}=\Trace(\xx\cdot\bar{\yy})$ and with the Jordan product
\begin{align}\label{defL}
\xx\yy=\tfrac{1}{2}(\xx\cdot\yy+\yy\cdot\xx),
\end{align}
where $\xx\cdot\yy$ denotes the ordinary product of matrices and $\bar{\yy}$ is the conjugate of $\yy$. Then $S_r(\RR)$, $r\geq 1$, $S_r(\mathbb{C})$, $r\geq 2$, $S_r(\mathbb{H})$, $r\geq 2$, and the exceptional $S_3(\mathbb{O})$ are the first four kinds of Euclidean simple Jordan algebras.  Note that in this case if $\KK\neq \mathbb{O}$, then
\begin{align}\label{defP}
\PP(\yy)\xx=\yy\cdot\xx\cdot\yy.
\end{align}
The fifth kind is the Euclidean space $\RR^{n+1}$, $n\geq 2$, with Jordan product
\begin{align}\label{scL}\begin{split}
\left(x_0,\ldots, x_n\right)\left(y_0,\ldots,y_n\right) =\left(\sum_{i=0}^n x_i y_i,x_0y_1+y_0x_1,\ldots,x_0y_n+y_0x_n\right).
\end{split}
\end{align}

To each Euclidean simple Jordan algebra one can attach the set of Jordan squares
\begin{align*}
\bar{\VV}=\left\{\xx^2\colon\xx\in\En \right\}.
\end{align*}
The interior $\VV$ of $\bar{\VV}$ is a symmetric cone.
Moreover $\VV$ is \emph{irreducible}, i.e. it is not the Cartesian product of two convex cones. One can prove that an open convex cone is symmetric and irreducible if and only if it is the cone $\VV$ of some Euclidean simple Jordan algebra. Each simple Jordan algebra corresponds to a symmetric cone, hence there exist up to linear isomorphism also only five kinds of symmetric cones. The cone corresponding to the Euclidean Jordan algebra $\RR^{n+1}$ equipped with Jordan product \eqref{scL} is called the Lorentz cone. 

We denote by $G(\En)$ the subgroup of the linear group $GL(\En)$ of linear automorphisms which preserves $\VV$, and we denote by $G$ the connected component of $G(\En)$ containing the identity.  Recall that if $\En=S_r(\RR)$ and $GL(r,\RR)$ is the group of invertible $r\times r$ matrices, elements of $G(\En)$ are the maps $g\colon\En\to\En$ such that there exists $\ab\in GL(r,\RR)$ with
\begin{align*}
g(\xx)=\ab\cdot\xx\cdot\ab^\top.
\end{align*}
We define $K=G\cap O(\En)$, where $O(\En)$ is the orthogonal group of $\En$. It can be shown that 
\begin{align}\label{defk}
K=\{ k\in G\colon k\ee=\ee \}.
\end{align} 

A \emph{multiplication algorithm} is a map $\VV\to G\colon \xx\mapsto \ww(\xx)$ such that $\ww(\xx)\ee=\xx$ for all $\xx\in\VV$. This concept is consistent with, so-called, division algorithm $\gw$, which was introduced by \cite{OlRu1962} and \cite{CaLe1996}, that is a mapping $\VV\ni\xx\mapsto \gw(\xx)\in G$ such that $\gw(\xx)\xx=\ee$ for any $\xx\in\VV$. If $\ww$ is a multiplication algorithm then $\gw=\ww^{-1}$ is a division algorithm and vice versa, if $\gw$ is a division algorithm then $\ww=\gw^{-1}$ is a multiplication algorithm. 

By \cite[Proposition III.4.3]{FaKo1994}, for any $g$ in the group $G$,
\begin{align*}
\det(g\xx)=(\DDet\,g)^{r/\dim\VV}\det\xx,
\end{align*}
where $\DDet$ denotes the determinant in the space of endomorphisms on $\VV$. Inserting a multiplication algorithm $g=\ww(\yy)$, $\yy\in\VV$, and $\xx=\ee$ we obtain
\begin{align}\label{zzz}
\DDet\left(\ww(\yy)\right) =(\det\yy)^{\dim\VV/r}
\end{align}
and hence
\begin{align}\label{detw}
\det(\ww(\yy)\xx) =\det\yy\det\xx
\end{align}
for any $\xx,\yy\in\VV$.

One of two important examples of multiplication algorithms is the map $\ww_1(\xx)=\PP\left(\xx^{1/2}\right)$. The remaining part of this section is to give the necessary background for the definition of the second basic example of multiplication algorithm, the one connected with Cholesky decomposition.

We will now introduce a very useful decomposition in $\En$, called \emph{spectral decomposition}. An element $\cb\in\En$ is said to be a \emph{idempotent} if $\cb\cb=\cb\neq 0$. Idempotents $\ab$ and $\bb$ are \emph{orthogonal} if $\ab\bb=0$. Idempotent $\cb$ is \emph{primitive} if $\cb$ is not a sum of two non-null idempotents. A \emph{complete system of primitive orthogonal idempotents} is a set $\left\{\cb_1,\ldots,\cb_r\right\}$ such that
\begin{align*}
\sum_{i=1}^r \cb_i=\ee\quad\mbox{and}\quad\cb_i\cb_j=\delta_{ij}\cb_i\quad\mbox{for } 1\leq i\leq j\leq r.
\end{align*}
The size $r$ of such system is a constant called the \emph{rank} of $\En$. Any element $\xx$ of a Euclidean simple Jordan algebra can be written as $\xx=\sum_{i=1}^r\lambda_i\cb_i$ for some complete $\left\{\cb_1,\ldots,\cb_r\right\}$ system of primitive orthogonal idempotents. The real numbers $\lambda_i$, $i=1,\ldots,r$ are the \emph{eigenvalues} of $\xx$. One can then define \emph{determinant} of $\xx$ by $\det\xx=\prod_{i=1}^r\lambda_i$. 

If $\cb$ is a primitive idempotent of $\En$, the only possible eigenvalues of $\LL(\cb)$ are $0$, $\tfrac{1}{2}$ and $1$. We denote by $\En(\cb,0)$, $\En(\cb,\tfrac{1}{2})$ and $\En(\cb,1)$ the corresponding eigenspaces. The decomposition 
\begin{align*}
\En=\En(\cb,0)\oplus\En(\cb,\tfrac{1}{2})\oplus\En(\cb,1)
\end{align*}
is called the \emph{Peirce decomposition of $\En$ with respect to $\cb$}. Note that $\PP(\cb)$ is the orthogonal projection of $\En$ onto $\En(\cb,1)$.

Fix a complete system of orthogonal idempotents $\left(\cb_i\right)_{i=1}^r$. Then for any $i,j\in\left\{1,\ldots,r\right\}$ we write
\begin{align*}
\begin{split}
\En_{ii} & =\En(\cb_i,1)=\RR \cb_i, \\
\En_{ij} & = \En\left(\cb_i,\frac{1}{2}\right) \cap \En\left(\cb_j,\frac{1}{2}\right) \mbox{ if }i\neq j.
\end{split}
\end{align*}
It can be proved (see \cite[Theorem IV.2.1]{FaKo1994}) that
\begin{align*}
\En=\bigoplus_{i\leq j}\En_{ij}
\end{align*}
and
\begin{align*}
\begin{split}
\En_{ij}\cdot\En_{ij} & \subset\En_{ii}+\En_{jj}, \\
\En_{ij}\cdot\En_{jk} & \subset\En_{ik},\mbox{ if }i\neq k, \\
\En_{ij}\cdot\En_{kl} & =\{0\},\mbox{ if }\{i,j\}\cap\{k,l\}=\emptyset.
\end{split}\end{align*}
The dimension of $\En_{ij}$ is, for any $i\neq j$, a constant $d$ called the Peirce constant. When $\En$ is $S_r(\KK)$, if $\{e_1,\ldots,e_r\}$ is an orthonormal basis of $\RR^r$, then $\En_{ii}=\RR e_i e_i^\top$ and $\En_{ij}=\KK\left(e_i e_j^\top+e_je_i^\top\right)$ for $i<j$ and $d$ is equal to $dim_{|\RR}\KK$.
	
For $1\leq k \leq r$ let $P_k$ be the orthogonal projection onto $\En^{(k)}=\En(\cb_1+\ldots+\cb_k,1)$, $\det^{(k)}$ the determinant in the subalgebra $\En^{(k)}$, and, for $\xx\in\VV$, $\Delta_k(\xx)=\det^{(k)}(P_k(\xx))$. Then $\Delta_k$ is called the principal minor of order $k$ with respect to the Jordan frame $\{\cb_k\}_{k=1}^r$. Note that $\Delta_r(\xx)=\det\xx$. For $s=(s_1,\ldots,s_r)\in\RR^r$ and $\xx\in\VV$, we write
\begin{align*}
\Delta_s(\xx)=\Delta_1(\xx)^{s_1-s_2}\Delta_2(\xx)^{s_2-s_3}\ldots\Delta_r(\xx)^{s_r}.
\end{align*}
$\Delta_s$ is called a generalized power function. If $\xx=\sum_{i=1}^r\alpha_i\cb_i$, then $\Delta_s(\xx)=\alpha_1^{s_1}\alpha_2^{s_2}\ldots\alpha_r^{s_r}$. For $s\in\RR^r$ and $\lambda\in\RR$ we will write
\begin{align*}
s+\lambda=(s_1+\lambda,\ldots,s_r+\lambda).
\end{align*}

We will now introduce some basic facts about triangular group. For $\xx$ and $\yy$ in $\VV$, let $\xx\Box\yy$ denote the endomorphism of $\En$ defined by
\begin{align*}
\xx\Box\yy=\LL(\xx\yy)+\LL(\xx)\LL(\yy)-\LL(\yy)\LL(\xx).
\end{align*}
If $\cb$ is an idempotent and $\zz\in\En(\cb,\frac{1}{2})$ we define the \emph{Frobenius transformation $\tau_\cb(\zz)$ in $G$} by
\begin{align*}
\tau_\cb(\zz)=\exp(2\zz\Box\cb).
\end{align*}
Given a Jordan frame $\{\cb_i\}_{i=1}^r$, the subgroup of $G$, 
\begin{align*}
\TT=\left\{\tau_{\cb_1}(\zz^{(1)})\ldots\tau_{\cb_{r-1}}(\zz^{(r-1)})\PP\left(\sum_{i=1}^r \alpha_i\cb_i\right)\colon \alpha_i>0, \zz^{(j)}\in \bigoplus_{k=j+1}^r\En_{jk}\right\}
\end{align*}
is called the \emph{triangular group corresponding to the Jordan frame $\{\cb_i\}_{i=1}^r$}. For any $\xx$ in $\VV$ there exists a unique $t_{\xx}$ in $\TT$ such that $\xx=t_{\xx}\ee$, that is, there exist (see \cite[Theorem VI.3.5]{FaKo1994}) elements $\zz^{(j)}\in \bigoplus_{k=j+1}^r \En_{jk}$, $1\leq j\leq r-1$ and positive numbers $\alpha_1, \ldots ,\alpha_r$ such that
\begin{align*}
\xx=\tau_{\cb_1}(\zz^{(1)})\ldots\tau_{\cb_{r-1}}(\zz^{(r-1)})\left(\sum_{k=1}^r \alpha_k \cb_k \right).
\end{align*}
Mapping $\ww_2\colon\VV\to\TT, \xx\mapsto \ww_2(\xx)=t_{\xx}$ is the second important example of a multiplication algorithm.

For $\En=S_r(\RR)$ we have $\VV=\VV_+$. Let us define for $1\leq i,j\leq r$ matrix $\mu_{ij}=\left(\gamma_{kl}\right)_{1\leq k,l\leq r}$ such that $\gamma_{ij}=1$ and all other entries are equal $0$.
Then for Jordan frame $\left\{\cb_i\right\}_{i=1}^r$, where $\cb_k=\mu_{kk}$, $k=1,\ldots,r$, we have $\zz_{jk}=(\mu_{jk}+\mu_{kj})\in\En_{jk}$ and $\norm{\zz_{jk}}^2=2$, $1\leq j,k\leq r$, $j\neq k$.
If $\zz^{(i)}\in\bigoplus_{j=i+1}^r \En_{ij}$, $i=1,\ldots,r-1$, then there exists $\alpha^{(i)}=(\alpha_{i+1},\ldots,\alpha_r)\in\RR^{r-i}$ such that $\zz^{(i)}=\sum_{j=i+1}^r \alpha_j\zz_{ij}$. Then the Frobenius transformation reads below
$$\tau_{\cb_i}(\zz^{(i)})\xx=\mathcal{F}_i(\alpha^{(i)})\cdot\xx\cdot \mathcal{F}_i(\alpha^{(i)})^\top,$$
where $\mathcal{F}_i(\alpha^{(i)})$ is so called Frobenius matrix:
\begin{align*}
\mathcal{F}_i(\alpha^{(i)})=I+\sum_{j=i+1}^r \alpha_j \mu_{ji},
\end{align*}
ie. bellow $i$th one of identity matrix there is a vector $\alpha^{(i)}$, particularly
\begin{align*}
\mathcal{F}_2(\alpha^{(2)})=\begin{pmatrix}
  1    &   0    &   0    & \cdots & 0 \\
  0    &   1    &   0    & \cdots & 0 \\
  0    & \alpha_{3} &   1    & \cdots & 0 \\
\vdots & \vdots & \vdots & \ddots & \vdots \\
  0    & \alpha_{r} &   0    & \cdots & 1
\end{pmatrix}.
\end{align*}

It can be shown (\cite[Proposition VI.3.10]{FaKo1994}) that for each $t\in\TT$, $\xx\in\VV$ and $s\in\RR^r$,
\begin{align}\label{dett}
\Delta_s(t\xx)=\Delta_s(t\ee)\Delta_s(\xx).
\end{align}
This property actually characterizes function $\Delta_s$ - see Theorem \ref{dettth}.

\section{Probability distributions}
The beta-Riesz distribution on symmetric cones with parameters $(s,t)\in \RR^r \times \RR^r$ for $s_i>(i-1)d/2$, $t_i>(i-1)d/2$, $i=1,\ldots,r$, ($d$ is the Peirce constant) is defined by its density
\begin{align*}
\mathcal{BR}(s,t)(\mathrm{d}\xx)=\frac{1}{\mathcal{B}_\VV(s,t)}\Delta_{s-\dim\VV/r}(\xx)\Delta_{t-\dim\VV/r}(\ee-\xx)I_\DD(\xx)\,\mathrm{d}\xx,\quad\xx\in\VV,
\end{align*}
where $\DD=\left\{\xx\in\VV\colon\ee-\xx\in\VV\right\}$ is an analogue of $(0,1)$ interval on real line and 
\begin{align*}
\mathcal{B}_\VV(s,t)=\frac{\Gamma_\VV(s)\Gamma_\VV(t)}{\Gamma_\VV(s+t)}
\end{align*}
for gamma function of symmetric cone $\Gamma_\VV(s)=(2\pi)^{(\dim\VV-r)/2}\prod_{j=1}^r \Gamma(s_j-(j-1)\tfrac{d}{2})$ (see \cite[VII.1.1.]{FaKo1994}).

Beta distribution on symmetric cone $\VV$ is a special case of beta-Riesz distribution for $s_1=\ldots=s_r=p>\dim\VV/r-1$ and $t_1=\ldots=t_r=q>\dim\VV/r-1$ with density
\begin{align*}
\mathcal{B}(p,q)(\mathrm{d}\xx)=\frac{1}{\mathcal{B}_\VV(p,q)}(\det\xx)^{p-\dim\VV/r}\det(\ee-\xx)^{q-\dim\VV/r}I_\DD(\xx)\,\mathrm{d}\xx,\quad\xx\in\VV,
\end{align*}
where $\mathcal{B}_\VV(p,q)=\frac{\Gamma_\VV(p)\Gamma_\VV(q)}{\Gamma_\VV(p+q)}$ and $\Gamma_\VV(p):=\Gamma_\VV(p,\ldots,p)$.
Basic properties of beta and beta-Riesz distributions on $\VV_+$ are given in \cite{Ha2005,Zine2012} and of beta distribution on $\VV_+$ in \cite{OlRu1964}.
For some recent advances in extending beta distribution the reader is referred to \cite{NRG2013}.

\section{Characterization of generalized beta distribution}\label{beta}
Henceforth we will denote by $\VV$ an irreducible symmetric cone of rank $r$. The densities of generalized beta distributions will be given in terms of \emph{$\ww$-logarithmic} functions, that is functions $f\colon\VV\to\RR$ that satisfies the following functional equation 
\begin{align}\label{wC}
f(\xx)+f(\ww(\ee)\yy)=f(\ww(\xx)\yy),\quad (\xx,\yy)\in\VV^2,
\end{align}
where $\ww$ is a multiplication algorithm. If $f$ is $\ww$-logarithmic, then $e^f$ is said to be \emph{$\ww$-multiplicative}. Functional equation \eqref{wC} for $\ww_1(\xx)=\PP(\xx^{1/2})$ on $\VV_+$ was already considered in \cite{BW2003} for differentiable functions and in \cite{Molnar2006} for continuous functions on real or complex Hermitian positive definite matrices of rank greater than $2$. Without any regularity assumptions it was solved on the Lorentz cone by \cite{Wes2007L}. The general forms of $\ww_1$- and $\ww_2-$logarithmic functions without any regularity assumptions were given in \cite{wC2013}.

It should be stressed that there exists infinite number of multiplication algorithms. If $\ww$ is a multiplication algorithm, then trivial extensions are given by $\ww^{(k)}(\xx) = \ww(\xx)k$, where $k\in K$ is fixed and $K$ is defined by \eqref{defk}. One may consider also multiplication algorithms of the form $P(\xx^{\alpha})t_{\xx^{1-2\alpha}}$, $\alpha\in\RR$, which interpolates between the two main examples: $\ww_1$ (which is $\alpha= 1/2$) and $\ww_2$ (which is $\alpha = 0$). In general, any multiplication algorithm may be written in the form $\ww(\xx) = \PP(\xx^{1/2})k_\xx$, where $k_\xx\in K$.

To define the transformation of random variables we will use two multiplication algorithms, $\ww$ and $\widetilde{\ww}$. Let $\gw$ and $\widetilde{\gw}$ be the corresponding division algorithms, that is, $\gw=\ww^{-1}$ and $\widetilde{\gw}=\widetilde{\ww}^{-1}$. Henceforth we will assume that $\ww$ additionally satisfies the following natural conditions
\begin{enumerate}[A.]
\item $\ww$ is homogeneous of degree $1$, that is $\ww(s\xx)=s\ww(\xx)$ for any $s>0$ and $\xx\in\VV$,
\item continuity in $\ee$, that is $\lim_{\xx\to\ee}\ww(\xx)=\ww(\ee)$,
\item surjectivity of the mapping $\VV\ni\xx\mapsto\gw(\xx)\ee\in\VV$,
\item differentiability of the mapping $\VV\ni\xx\mapsto\ww(\xx)$,
\end{enumerate}
and the same is assumed for $\widetilde{\ww}$.

Conditions $A-C$ are assumed in order to use the result of \cite{BKInf2014} regarding the generalized fundamental equation of information on $\VV$ (see Theorem \ref{inform} below) and $D$ is assumed to ensure that the Jacobian of the considered transformation exists. By $\we$ and $\widetilde{\ww}_\ee$ we will denote $\ww(\ee)$ and $\widetilde{\ww}(\ee)$ respectively. 

We start with the direct result, where we show that if $X$ and $Y$ have densities of the form \eqref{wbeta}, then the transformed variables are independent. Recall that $\VV$ is an irreducible symmetric cone of rank $r$ and $\ww$-multiplicative function $f$ satisfies the following functional equation
$$f(\xx)f(\ww(\ee)\yy)=f(\ww(\xx)\yy)$$
for any $(\xx,\yy)\in\VV$.

\begin{theorem}\label{direct}
Assume that multiplication algorithms $\ww$ and $\widetilde{\ww}$ are differentiable (condition $D$) and let $X$ and $Y$ be independent random variables valued in $\DD$ with densities of the form
\begin{align}\begin{split}\label{wbeta}
f_X(\xx) &= c_X (\det\xx)^{\dim\VV/r}i(\xx)h(\xx)f(\ee-\xx)I_\DD(\xx), \\
f_Y(\xx) &= c_Y h(\we\xx)i(\ee-\we\xx)I_\DD(\xx),
\end{split}\end{align}
where 
\begin{itemize}
\item $i$ is $\ww$- and $\widetilde{\ww}$-multiplicative, 
\item $f$ is $\widetilde{\ww}$ multiplicative,
\item $h$ is $\ww$-multiplicative,
\end{itemize}
and $c_X$, $c_Y$ are normalizing constants.
Then 
\begin{align*}
U=\ee-\ww(X)Y\quad\mbox{ and }\quad V=\widetilde{\gw}(U)(\ee-X)
\end{align*}
are independent random variables.
\end{theorem}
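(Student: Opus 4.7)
The plan is to compute the density of $(U,V)$ by change of variables and to verify that it factors into a product of a function of $U$ and a function of $V$. I will invert the map via $X = \ee - \widetilde{\ww}(U) V$ and $Y = \gw(X)(\ee - U)$, compute the Jacobian, substitute into $f_X(X) f_Y(Y)$, and use the multiplicative properties of $i$, $h$, $f$ to collapse the resulting expression.

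For the Jacobian, I decompose $(X,Y) \mapsto (X,U) \mapsto (V,U)$; each step is affine in the changing variable, so by \eqref{zzz} the total Jacobian determinant equals $\DDet(\ww(X)) \cdot \DDet(\widetilde{\gw}(U)) = (\det X)^{\dim\VV/r}(\det U)^{-\dim\VV/r}$. Dividing $f_X(X) f_Y(Y)$ by this cancels the $(\det X)^{\dim\VV/r}$ built into $f_X$ and leaves an overall factor $(\det U)^{\dim\VV/r}$.

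Three multiplicative identities then do the real work. From $\ww(X) Y = \ee - U$ together with the $\ww$-multiplicativity of $h$, $h(X) h(\we Y) = h(\ee - U)$; from $\widetilde{\ww}(U) V = \ee - X$ and the $\widetilde{\ww}$-multiplicativity of $f$, $f(\ee - X) = f(U) f(\widetilde{\ww}_\ee V)$. The crucial bridge is the identity
\begin{align*}
\ww(X)(\ee - Y) = X + U - \ee = \widetilde{\ww}(U)(\ee - V),
\end{align*}
which follows by directly expanding both sides. Combined with $\ee - \we Y = \we(\ee - Y)$, valid because $\we \in K$ fixes $\ee$, applying $i$ to the identity and using $\ww$-multiplicativity on the left and $\widetilde{\ww}$-multiplicativity on the right yields
\begin{align*}
i(X) i(\ee - \we Y) = i(\ww(X)(\ee - Y)) = i(\widetilde{\ww}(U)(\ee - V)) = i(U) i(\ee - \widetilde{\ww}_\ee V).
\end{align*}

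Putting these simplifications together, the density of $(U,V)$ on $\DD \times \DD$ factors as
\begin{align*}
f_{U,V}(\ub,\vb) = c_X c_Y (\det \ub)^{\dim\VV/r} h(\ee - \ub) i(\ub) f(\ub) \cdot f(\widetilde{\ww}_\ee \vb) i(\ee - \widetilde{\ww}_\ee \vb),
\end{align*}
proving independence. The only real hurdle is spotting the bridge identity $\ww(X)(\ee - Y) = \widetilde{\ww}(U)(\ee - V)$; once that is in hand, the functional equations of $i$, $h$, $f$ apply mechanically in sequence. Condition D is used to guarantee validity of the change of variables, and the observation that $\we \in K$ fixes $\ee$ (so $\we$ commutes with $\ee - \cdot$ in the needed way) is the one further small point.
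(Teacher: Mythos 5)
Your proposal is correct and follows essentially the same route as the paper: a change of variables with the Jacobian computed via a two-step (block-triangular) decomposition using \eqref{zzz}, followed by the multiplicative identities for $h$, $f$, and $i$ to factor the joint density. The only cosmetic difference is that you decompose the forward map rather than its inverse, and you make explicit the bridge identity $\ww(X)(\ee-Y)=X+U-\ee=\widetilde{\ww}(U)(\ee-V)$ that the paper subsumes under ``repeatedly using multiplicative properties.''
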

\begin{proof}
Define the mapping $\psi\colon\DD^2\to\DD^2$ by formula
\begin{align*}
\psi(\xx,\yy)=\left(\ee-\ww(\xx)\yy,\widetilde{\gw}(\ee-\ww(\xx)\yy)(\ee-\xx)\right),
\end{align*} 
where $\widetilde{\gw}=\widetilde{\ww}^{-1}$.
Then we have $(U,V)=\psi(X,Y)$ and the inverse mapping $\psi^{-1}\colon\DD^2\to\DD^2$ is given by
\begin{align*}
(\xx,\yy)=\psi^{-1}(\ub,\vb)=\left(\ee-\widetilde{\ww}(\ub)\vb,\gw(\ee-\widetilde{\ww}(\ub)\vb)(\ee-\ub)\right),
\end{align*}
where $\gw=\ww^{-1}$. Hence $\psi$ is a bijection. We will find the Jacobian of $\psi^{-1}$ in two steps. Let us observe that $\psi^{-1}=\phi_2\circ\phi_1$ with
\begin{align*}
\phi_1(\ub,\vb) & =(\ee-\widetilde{\ww}(\ub)\vb,\ee-\ub)=(\ab,\bb), \\
\phi_2(\ab,\bb) & =(\ab ,\gw(\ab)\bb)=(\xx,\yy).
\end{align*}
Denote by $J_i$ the Jacobian of mapping $\phi_i$, $i=1,2$.
We have
\begin{align*}
J_1= & 
\left|
\begin{array}{cc}
\mathrm{d}\ab/\mathrm{d}\ub & \mathrm{d}\ab/\mathrm{d}\vb \\
\mathrm{d}\bb/\mathrm{d}\ub & \mathrm{d}\bb/\mathrm{d}\vb
\end{array}
\right|
=
\left| 
\begin{array}{cc}
\mathrm{d}\ab/\mathrm{d}\ub & -\widetilde{\ww}(\ub) \\
-\mathrm{Id}_\VV & 0
\end{array}
\right| 
=
\DDet(\widetilde{\ww}(\ub)) \\
\intertext{and}
J_2= & 
\left|
\begin{array}{cc}
\mathrm{d}\xx/\mathrm{d}\ab & \mathrm{d}\xx/\mathrm{d}\bb \\
\mathrm{d}\yy/\mathrm{d}\ab & \mathrm{d}\yy/\mathrm{d}\bb
\end{array}
\right|
=
\left| 
\begin{array}{cc}
\mathrm{Id}_\VV & 0 \\
\mathrm{d}\yy/\mathrm{d}\ab & \gw(\ab)
\end{array}
\right| 
=
\DDet(\gw(\ab)).
\intertext{Finally, by \eqref{zzz}, we get}
J & =J_1J_2=\left(\frac{\det\ub}{\det(\ee-\widetilde{\ww}(\ub)\vb)} \right)^{\tfrac{\dim\VV}{r}}.
\end{align*}
The joint density $f_{(U,V)}$ of $(U,V)$ is given by
\begin{align}\label{fuv}\begin{split}
f_{(U,V)}(\ub,\vb)=f_X(\ee-\widetilde{\ww}(\ub)\vb)f_Y(\gw(\ee-\widetilde{\ww}(\ub)\vb)(\ee-\ub))\left(\frac{\det\ub}{\det(\ee-\widetilde{\ww}(\ub)\vb)} \right)^{\tfrac{\dim\VV}{r}},
\end{split}\end{align}
where $f_X$ and $f_Y$ denote the densities of $X$ and $Y$, respectively. 
Inserting \eqref{wbeta} into \eqref{fuv} and repeatedly using multiplicative properties of respective functions (that is, if $h$ is $\ww$-multiplicative, then $h(\xx)h(\we g(\xx)\yy)=h(\yy)$ for any $\xx,\yy\in\VV$), we obtain
\begin{align*}
f_{(U,V)}(\ub,\vb)=c_Xc_Y(\det\ub)^{\dim\VV/r}i(\ub)f(\ub)h(\ee-\ub)I_\DD(\ub)\cdot f(\widetilde{w}_\ee\vb)i(\ee-\widetilde{w}_\ee\vb)I_\DD(\vb),
\end{align*}
what completes the proof.
\end{proof}
\begin{remark}\label{remX}
Note that if $i(\xx)=(\det\xx)^{p_1-\dim\VV/r}$, $f(\xx)=(\det\xx)^{p_2-\dim\VV/r}$ and $h(\xx)=(\det\xx)^{p_3-\dim\VV/r}$ with $p_i>\dim\VV/r-1$, $i=1,2,3$, then $(X,Y)\sim\mathcal{B}(p_1+p_3,p_2)\otimes\mathcal{B}(p_3,p_1)$ and $(U,V)\sim\mathcal{B}(p_1+p_2,p_3)\otimes\mathcal{B}(p_2,p_1)$, regardless of the choice of $\ww$ and $\widetilde{\ww}$.
\end{remark}

In order to prove the harder part of the characterization we will need the following result regarding the solution to fundamental equation of information on symmetric cones (see  \cite[Theorem 3.5]{BKInf2014}).
Recall that $\DD=\left\{\xx\in\VV\colon\ee-\xx\in\VV\right\}$ and define 
\begin{align*}
\DD_0=\{(\ab,\bb)\in\DD^2\colon \ab+\bb\in\DD\}.
\end{align*}
\begin{theorem}\label{inform}
Let $a, b, c, d\colon\DD\to\RR$ be continuous functions that satisfy the following functional equation
\begin{align*}
a(\xx)+b(\gw(\ee-\xx)\yy)=c(\yy)+d(\widetilde{\gw}(\ee-\yy)\xx),\quad (\xx,\yy)\in\DD_0.
\end{align*}
If multiplication algorithms $\ww=\gw^{-1}$ and $\widetilde{\ww}=\widetilde{\gw}^{-1}$ satisfy conditions $A-C$, then there exist real constants $C_i$, $i=1,\ldots,4$, and continuous functions $h_i$, $i=1,2,3$, where 
\begin{itemize}
\item $h_1$ is $\ww$- and $\widetilde{\ww}$-logarithmic, 
\item $h_2$ is $\widetilde{\ww}$ logarithmic, 
\item $h_3$ is $\ww$-logarithmic,
\end{itemize}
such that for any $\xx\in\DD$,
\begin{align*}
a(\xx) & = h_1(\ee-\xx)+h_2(\xx)+h_3(\ee-\xx)+C_1, \\
b(\xx) & = h_1(\ee-\we\xx)+h_3(\we\xx)+C_2, \\
c(\xx) & = h_1(\ee-\xx)+h_2(\ee-\xx)+h_3(\xx)+C_3, \\
d(\xx) & = h_1(\ee-\widetilde{\ww}_\ee\xx)+h_2(\widetilde{\ww}_\ee\xx)+C_4,
\end{align*}
and $C_1+C_2 = C_3+C_4$.
\end{theorem}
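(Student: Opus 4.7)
The plan is to adapt the Lajkó--Maksa solution of the scalar generalized fundamental equation of information $F(x)+G(y/(1-x))=H(y)+K(x/(1-y))$ to the cone setting. The central idea is to use the division algorithms to decouple $\xx$ and $\yy$, producing an identity on all of $\DD\times\DD$ from which the three multiplicative components can be extracted via Cauchy-type arguments and the existing characterization of $\ww$-logarithmic functions.

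The first step is to substitute $\yy\mapsto\ww(\ee-\xx)\yy$. Because $\ww(\ee-\xx)\in G$ preserves the cone ordering and satisfies $\ww(\ee-\xx)\ee=\ee-\xx$, one checks that $(\xx,\ww(\ee-\xx)\yy)\in\DD_0$ for \emph{every} $\xx,\yy\in\DD$. Applying $\gw(\ee-\xx)\ww(\ee-\xx)=\mathrm{Id}$ turns the original equation into
\begin{align*}
a(\xx)+b(\yy)=c\bigl(\ww(\ee-\xx)\yy\bigr)+d\bigl(\widetilde{\gw}(\ee-\ww(\ee-\xx)\yy)\,\xx\bigr),\qquad \xx,\yy\in\DD,
\end{align*}
with the variables now fully decoupled.

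Fixing $\yy=\yy_0$ and $\xx=\xx_0$ separately expresses $a$ and $b$ explicitly in terms of $c, d$ (plus constants). Substituting these back into the decoupled identity yields a relation involving only $c$ and $d$. Varying the auxiliary parameters $\xx_0, \yy_0$ and using continuity of $a,b,c,d$ together with continuity of $\ww,\widetilde{\ww}$ at $\ee$ (condition B), I expect to isolate $\ww$- and $\widetilde{\ww}$-logarithmic cocycle equations of the form
\begin{align*}
\phi(\xx)+\phi(\we\yy)=\phi(\ww(\xx)\yy),
\end{align*}
together with its $\widetilde{\ww}$-analog. The continuous solutions of such equations are precisely the $\ww$- or $\widetilde{\ww}$-logarithmic functions studied in \cite{wC2013}. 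Inspecting how each term transforms under $\ww$ and $\widetilde{\ww}$, one partitions the contributions into three parts to produce $h_1$ (compatible with both algorithms), $h_2$ (only $\widetilde{\ww}$), and $h_3$ (only $\ww$); the shift constants $C_1,\ldots,C_4$ absorb the normalization at $\xx_0,\yy_0$. Evaluating the original equation at any single admissible pair then forces $C_1+C_2=C_3+C_4$.

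The main obstacle is separating the $h_1, h_2, h_3$ contributions cleanly. In the scalar case the factor $1-x$ commutes with everything and the corresponding decomposition is essentially free; here, because $\ww(\xx\yy)\neq\ww(\xx)\ww(\yy)$ in general and because $\ww$ and $\widetilde{\ww}$ are \emph{a priori} unrelated, the identities obtained on various slices of $\DD\times\DD$ must be patched together carefully. Condition C (surjectivity of $\xx\mapsto\gw(\xx)\ee$) is essential to cover enough of the cone, while condition A (homogeneity) is what allows the Cauchy separation to propagate under scaling; condition B finally extends the derived identities by continuity from a dense subset to all of $\DD$.
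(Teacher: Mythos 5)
You should first be aware that the paper does not prove this statement at all: Theorem \ref{inform} is quoted verbatim from \cite[Theorem 3.5]{BKInf2014}, so there is no internal proof to compare your argument against. Judged on its own merits, your proposal does get the correct and genuinely useful opening move. The substitution $\yy\mapsto\ww(\ee-\xx)\yy$ is legitimate: for each fixed $\xx\in\DD$ this map is a bijection from $\DD$ onto the slice $\left\{\yy'\colon(\xx,\yy')\in\DD_0\right\}$ (your verification via $\ww(\ee-\xx)(\ee-\yy)=(\ee-\xx)-\ww(\ee-\xx)\yy\in\VV$ is sound), and it replaces the constrained domain $\DD_0$ by the full product $\DD^2$ while turning $b(\gw(\ee-\xx)\ww(\ee-\xx)\yy)$ into $b(\yy)$.

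From that point on, however, the proposal is a statement of intent rather than a proof, and what is missing is precisely the content of the theorem. After the substitution the right-hand side $c(\ww(\ee-\xx)\yy)+d\bigl(\widetilde{\gw}(\ee-\ww(\ee-\xx)\yy)\xx\bigr)$ remains fully entangled in $(\xx,\yy)$; fixing $\yy_0$, substituting back, and ``varying the auxiliary parameters'' does not by itself produce logarithmic Cauchy equations. You write that you ``expect to isolate'' them and that ``one partitions the contributions into three parts,'' but you never exhibit the Pexider-type reduction that actually yields three separate functions, nor do you explain why exactly one of them must be logarithmic with respect to \emph{both} $\ww$ and $\widetilde{\ww}$ while the other two are logarithmic with respect to only one algorithm each, nor where the specific compositions with $\we$ and $\widetilde{\ww}_\ee$ in the formulas for $b$ and $d$ come from. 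You yourself flag this separation as ``the main obstacle'' and leave it unresolved, and the claimed roles of conditions A--C are asserted rather than used. Since this decomposition is exactly what distinguishes the cone case from the scalar Lajk\'o--Maksa argument (where, as you note, commutativity makes it essentially free), the proposal as written has a genuine gap and would need the full machinery of \cite{BKInf2014} to be completed.
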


We are now ready to prove the main theorem.
\begin{theorem}[Characterization of generalized beta distributions]\label{betath}
Let $X$ and $Y$ be independent random variables valued in $\DD$ with continuous and strictly positive densities.  
Let additionally $\psi\colon\DD^2\to\DD^2$ be a mapping defined through 
\begin{align*}
\psi(\xx,\yy)=\left(\ee-\ww(\xx)\yy,\widetilde{\gw}(\ee-\ww(\xx)\yy)(\ee-\xx)\right),
\end{align*} 
where $\ww=\gw^{-1}$ and $\widetilde{\ww}=\widetilde{\gw}^{-1}$ are multiplication algorithms satisfying conditions $A-D$. If components of vector $(U,V)=\psi(X,Y)$ are independent, then there exist continuous functions $i$, $f$, $g$, where
\begin{itemize}
\item $i$ is $\ww$- and $\widetilde{\ww}$-multiplicative, 
\item $f$ is $\widetilde{\ww}$ multiplicative, 
\item $h$ is $\ww$-multiplicative,
\end{itemize}
and \eqref{wbeta} holds.
\end{theorem}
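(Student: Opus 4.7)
The plan is to convert the independence condition into the generalized fundamental equation of information on $\VV$ and then apply Theorem \ref{inform}. Since $X, Y$ have strictly positive continuous densities and $\psi$ is a bijection on $\DD^2$, the independence of $(U,V)=\psi(X,Y)$, together with the Jacobian computed in the proof of Theorem \ref{direct}, gives
\begin{align*}
f_X(\xx)f_Y(\yy) = f_U(\ee-\ww(\xx)\yy)\,f_V(\widetilde{\gw}(\ee-\ww(\xx)\yy)(\ee-\xx))\left(\frac{\det\xx}{\det(\ee-\ww(\xx)\yy)}\right)^{\dim\VV/r}
\end{align*}
for every $(\xx,\yy)\in\DD^2$, and strict positivity allows me to pass to logarithms.

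Next I perform the substitution $(\xx,\yy)\mapsto(\ee-\xx,\gw(\ee-\xx)\yy)$. Using $\gw(\ee-\xx)(\ee-\xx)=\ee$, the expression $\ee-\ww(\xx)\yy$ is sent to $\ee-\yy$. The identities $\ee-\gw(\ee-\xx)\yy=\gw(\ee-\xx)[(\ee-\xx)-\yy]$ and $\ee-\widetilde{\gw}(\ee-\yy)\xx=\widetilde{\gw}(\ee-\yy)[(\ee-\yy)-\xx]$, combined with the fact that $\gw(\ee-\xx),\widetilde{\gw}(\ee-\yy)\in G$ preserve $\VV$, show that both the constraint $U\in\DD$ and the constraint $V\in\DD$ from the original domain $\DD^2$ translate in the new variables into the single condition $\xx+\yy\in\DD$. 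Hence the substitution is a bijection from $\DD_0$ onto $\DD^2$, and after moving the two determinant terms to the respective $\xx$- and $\yy$-sides the equation becomes
\begin{align*}
a(\xx)+b(\gw(\ee-\xx)\yy)=c(\yy)+d(\widetilde{\gw}(\ee-\yy)\xx),\qquad (\xx,\yy)\in\DD_0,
\end{align*}
where
\begin{align*}
a(\xx)&=\log f_X(\ee-\xx)-\tfrac{\dim\VV}{r}\log\det(\ee-\xx),\\
b(\xx)&=\log f_Y(\xx),\\
c(\xx)&=\log f_U(\ee-\xx)-\tfrac{\dim\VV}{r}\log\det(\ee-\xx),\\
d(\xx)&=\log f_V(\xx),
\end{align*}
are continuous on $\DD$.

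Theorem \ref{inform} now produces continuous $h_1,h_2,h_3$, with $h_1$ both $\ww$- and $\widetilde{\ww}$-logarithmic, $h_2$ $\widetilde{\ww}$-logarithmic, and $h_3$ $\ww$-logarithmic, and constants $C_1,C_2$ such that $a(\xx)=h_1(\ee-\xx)+h_2(\xx)+h_3(\ee-\xx)+C_1$ and $b(\xx)=h_1(\ee-\we\xx)+h_3(\we\xx)+C_2$. Substituting $\xx\mapsto\ee-\xx$ in the first identity and exponentiating both yields
\begin{align*}
f_X(\xx)=c_X(\det\xx)^{\dim\VV/r}e^{h_1(\xx)}e^{h_3(\xx)}e^{h_2(\ee-\xx)},\qquad f_Y(\xx)=c_Y\,e^{h_3(\we\xx)}e^{h_1(\ee-\we\xx)},
\end{align*}
and putting $i:=e^{h_1}$, $h:=e^{h_3}$, $f:=e^{h_2}$ reproduces exactly the form \eqref{wbeta}; the required multiplicative properties of $i,h,f$ are inherited from the logarithmic properties of $h_1,h_3,h_2$.

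The main obstacle I anticipate is the domain analysis in the substitution step: one has to verify that the two a priori separate conditions $U\in\DD$ and $V\in\DD$, both present in the original equation, reduce simultaneously to $\xx+\yy\in\DD$ under the substitution, so that Theorem \ref{inform} applies on the full $\DD_0$ with no leftover restriction. Once this has been checked, what remains is bookkeeping: translating the explicit output of Theorem \ref{inform} back into density language produces \eqref{wbeta} directly.
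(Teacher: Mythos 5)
Your proposal is correct and takes essentially the same route as the paper: both reduce the independence condition, via the change-of-variables formula for $\psi$ and a substitution that turns $\ee-\ww(\xx)\yy$ into $\ee-\yy$, to the fundamental equation of information \eqref{A} on $\DD_0$ with exactly the same functions $a,b,c,d$, and then invoke Theorem \ref{inform}. The paper merely runs the substitution in the opposite direction (setting $\ub=\ee-\yy$, $\vb=\widetilde{\gw}(\ee-\yy)\xx$ in the $(\ub,\vb)$-form of the density identity), and your domain analysis showing the substitution carries $\DD_0$ bijectively onto $\DD^2$ matches the paper's verification.
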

\begin{proof}
Let us note that, as in the proof of Theorem \ref{direct}, the joint density of $(U,V)$ has the form \eqref{fuv}. This equality is satisfied almost everywhere with respect to Lebeasgue measure. According to the assumption that $(U,V)$ has independent components, we have $f_{(U,V)}(\ub,\vb)=f_U(\ub)f_V(\vb)$.

Since the respective densities are continuous, \eqref{fuv} holds true for any $\ub,\vb\in\DD$. Taking logarithm of both sides of \eqref{fuv} (it is permitted, since densities are strictly positive on $\DD$), we obtain
\begin{align*}
a\left(\widetilde{\ww}(\ub)\vb\right)+b\left(\gw(\ee-\widetilde{\ww}(\ub)\vb)(\ee-\ub)\right)=c(\ee-\ub)+d(\vb),\quad (\ub,\vb)\in\DD^2,
\end{align*}
where
\begin{align*}
a(\ub) & = \log\,f_X(\ee-\ub)-\frac{\dim\VV}{r}\log\det(\ee-\ub), \\
b(\ub) & = \log\,f_Y(\ub), \\
c(\ub) & = \log\,f_U(\ee-\ub)-\frac{\dim\VV}{r}\log\det(\ee-\ub),\\
d(\ub) & = \log\,f_V(\ub),
\end{align*}
for $\ub\in\DD$. Let us take $\ub=\ee-\yy$ and $\vb=\widetilde{\gw}(\ee-\yy)\xx$. Then $\yy\in\DD$, because $\ub\in\DD$. Moreover, it is clear that $\xx\in\VV$. Since $\vb\in\DD$ and $\ee-\vb=\widetilde{\gw}(\ee-\yy)(\ee-\xx-\yy)\in\DD$, we have $\ee-(\xx+\yy)\in\VV$. Thus,
\begin{align}\label{A}
a(\xx)+b(\gw(\ee-\xx)\yy)=c(\yy)+d(\widetilde{\gw}(\ee-\yy)\xx)
\end{align}
for any $(\xx,\yy)\in\DD_0=\{(\ab,\bb)\in\DD^2\colon \ab+\bb\in\DD\}$. Theorem \ref{inform} implies that there exist continuous functions $h_1$, $h_2$ and $h_3$ such that
\begin{itemize}
\item $h_1$ is $\ww$- and $\widetilde{\ww}$-logarithmic function, 
\item $h_2$ is $\widetilde{\ww}$ logarithmic, 
\item $h_3$ is $\ww$-logarithmic,
\end{itemize}
and
\begin{align*}
a(\xx) & = h_1(\ee-\xx)+h_2(\xx)+h_3(\ee-\xx)+C_1, \\
b(\xx) & = h_1(\ee-\we\xx)+h_3(\we\xx)+C_2,
\end{align*}
for real constants $C_i$, $i=1,2$. That is, for $\xx\in\DD$ we have
\begin{align}\label{hi}
f_X(\xx) & =e^{a(\ee-\xx)+\frac{\dim\VV}{r}\log\det\xx}=e^{C_1}\det(\xx)^{\frac{\dim\VV}{r}} e^{h_1(\xx)} e^{h_3(\xx)} e^{h_2(\ee-\xx)}, \\
f_Y(\xx) & =e^{b(\xx)}=e^{C_2} e^{h_3(\we\xx)} e^{h_1(\ee-\we\xx)},
\end{align}
what is essentially \eqref{wbeta} for $i(\xx)=e^{h_1(\xx)}$, $f(\xx)=e^{h_2(\xx)}$ and $h(\xx)=e^{h_3(\xx)}$.
\end{proof}

As was mentioned earlier, the general form of $\ww$-logarithmic functions is known in two basic examples, namely $\ww=\ww_1=\PP(\xx^{1/2})$ and $\ww=\ww_2=t_\xx\in\TT$. These forms (see Theorem \ref{detwth} and \ref{dettth} below) will be needed in the proof of the main theorem. The proofs of these results may be found in \cite{wC2013}. Function $H$ is called generalized logarithmic, if $H(ab)=H(a)+H(b)$ for any positive $a$ and $b$.

\begin{theorem}[$\ww_1$-logarithmic Cauchy functional equation]\label{detwth}
Let $f\colon \VV\to\RR$ be a function such that
\begin{align*}
f(\xx)+f(\yy)=f\left(\PP\left(\xx^{1/2}\right)\yy\right),\quad (\xx,\yy)\in\VV^2.
\end{align*}
Then there exists a generalized logarithmic function $H$ such that for any $\xx\in\VV$,
\begin{align*}
f(\xx)=H(\det\xx).
\end{align*}
\end{theorem}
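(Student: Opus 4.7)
My plan is to reduce the functional equation to the diagonal subalgebra of a fixed Jordan frame, and then lift back via the spectral theorem together with invariance under the isotropy group $K$. First I would record the elementary consequences: $\xx=\yy=\ee$ gives $f(\ee)=0$; taking $\yy=\xx^{-1}$ and using the Jordan identity $\PP(\xx^{1/2})\xx^{-1}=\ee$ yields $f(\xx^{-1})=-f(\xx)$; and $\xx=s\ee$, via $\PP((s\ee)^{1/2})\yy=s\yy$, gives $f(s\yy)=\phi(s)+f(\yy)$ with $\phi(s):=f(s\ee)$ satisfying $\phi(st)=\phi(s)+\phi(t)$. Then $H(t):=\phi(t^{1/r})$ is again generalized logarithmic (since the identity $r\phi(a^{1/r})=\phi(a)$ holds algebraically) and is the natural candidate for the answer.

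Next I subtract the candidate: set $\tilde f(\xx):=f(\xx)-H(\det\xx)$. By $\det(\PP(\xx^{1/2})\yy)=\det\xx\cdot\det\yy$ (special case of \eqref{detw}) and the fact that $H$ is generalized logarithmic, $\tilde f$ still satisfies the functional equation, and $\tilde f(s\ee)=0$ for every $s>0$ by construction. Hence it suffices to prove: if $f$ solves the equation with $f(s\ee)=0$ for all $s>0$, then $f\equiv 0$. To attack this I fix a Jordan frame $\{\cb_i\}_{i=1}^r$; when $\xx=\sum_i\lambda_i\cb_i$ and $\yy=\sum_i\mu_i\cb_i$ live in the diagonal subalgebra they operator-commute, so $\PP(\xx^{1/2})\yy$ collapses to the componentwise Jordan product $\sum_i\lambda_i\mu_i\cb_i$. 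Writing $F(\lambda):=f(\sum_i\lambda_i\cb_i)$, the equation becomes $F(\lambda_1\mu_1,\ldots,\lambda_r\mu_r)=F(\lambda)+F(\mu)$; fixing all but one coordinate yields $F(\lambda)=\sum_i\phi_i(\lambda_i)$ with each $\phi_i$ generalized logarithmic, and the normalization $f(s\ee)=0$ becomes $\sum_i\phi_i(s)=0$.

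The core obstacle is to prove that all $\phi_i$ coincide, equivalently that $F$ is symmetric, equivalently that $f$ is invariant under $K$ (since $K$ acts transitively on orderings of a Jordan frame). The starting point is the built-in symmetry $f(\PP(\xx^{1/2})\yy)=f(\PP(\yy^{1/2})\xx)$: the two arguments share a spectrum -- in the matrix model they read $MM^\top$ and $M^\top M$ with $M=\xx^{1/2}\yy^{1/2}$ -- so lie in a common $K$-orbit. This shows $f$ agrees on many $K$-related pairs; the hard part will be exhausting every pair $(\zz,k\zz)$ by varying $\xx,\yy$ (iterating if necessary). The cleanest route uses that each $k\in K$ is a Jordan automorphism, so that $\PP(k\xx^{1/2})=k\PP(\xx^{1/2})k^{-1}$ and the obstruction $g(\xx):=f(k\xx)-f(\xx)$ itself satisfies the functional equation with $g(\ee)=0$; a Peirce-decomposition argument then forces $g$ to vanish.

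Once $K$-invariance is secured, symmetry of $F$ combined with $\sum_i\phi_i\equiv 0$ yields $\phi_i\equiv 0$ for all $i$, hence $F\equiv 0$. By the spectral theorem every $\xx\in\VV$ lies in the $K$-orbit of some $\sum_i\lambda_i\cb_i$, so $f\equiv 0$ propagates to all of $\VV$. Undoing the reduction recovers $f(\xx)=H(\det\xx)$ with the generalized logarithmic $H=\phi(\cdot^{1/r})$, completing the proof.
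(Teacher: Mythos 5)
First, a bookkeeping point: the paper does not prove Theorem \ref{detwth} itself --- it is quoted from \cite{wC2013} --- so there is no internal proof to compare against, and your attempt has to be judged on its own terms. The reduction you set up is sound as far as it goes: the elementary consequences, the subtraction of the candidate $H(\det\xx)$ with $H(t)=\phi(t^{1/r})$, the restriction to the diagonal subalgebra giving $F(\lambda)=\sum_i\phi_i(\lambda_i)$ with each $\phi_i$ generalized logarithmic, and the observation that everything reduces to proving $K$-invariance of a solution vanishing on the ray $\RR_+\ee$ are all correct. The genuine gap is exactly the step you yourself label ``the hard part'': $K$-invariance is never proved. The route you call cleanest --- note that $g(\xx):=f(k\xx)-f(\xx)$ again satisfies the functional equation with $g(\ee)=0$ and assert that ``a Peirce-decomposition argument then forces $g$ to vanish'' --- is circular. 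Every solution of the equation vanishes at $\ee$, and indeed $g(s\ee)=0$ for all $s>0$ because $k\ee=\ee$; so $g$ has exactly the same properties as the normalized function $\tilde f$ whose vanishing is the content of the theorem. The implication ``$g$ solves the equation and vanishes on $\RR_+\ee$, hence $g\equiv 0$'' is the statement being proved, not a tool available for proving it, and no additional property of $g$ is identified that would break the circle.

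The fallback route via the symmetry $f(\PP(\xx^{1/2})\yy)=f(\PP(\yy^{1/2})\xx)$ is also incomplete as stated. In the matrix model the directly reachable pairs are $\left(MM^\top,\,M^\top M\right)$ with $M=\xx^{1/2}\cdot\yy^{1/2}$, and a product of two positive definite symmetric matrices is necessarily diagonalizable with positive eigenvalues; so for a fixed $\zz$ the conjugates $k\zz$ obtained this way form in general a proper subset of the $K$-orbit of $\zz$ (for $\zz=\ee$ one reaches only $\ee$ itself). One must therefore prove that iterating these moves connects every pair $(\zz,k\zz)$, which is a concrete geometric claim requiring the rank $\geq 2$ structure, Peirce decomposition and Frobenius-type transformations --- this is precisely where the real work lies in \cite{wC2013}. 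Without it, the conclusion that the $\phi_i$ coincide, hence that $F$ depends only on $\prod_i\lambda_i$, does not follow, and the proof is not complete.
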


\begin{theorem}[$\ww_2$-logarithmic Cauchy functional equation]\label{dettth}
Let $f\colon \VV\to \RR$ be a function satisfying
\begin{align*}
f(\xx)+f(\yy)=f(t_{\yy}\xx)
\end{align*}
for any $\xx$ and $\yy$ in the cone $\VV$ of rank $r$, $t_{\yy}\in\TT$, where $\TT$ is the triangular group with respect to the Jordan frame $\left\{\cb_i\right\}_{i=1}^r$. Then there exist generalized logarithmic functions $H_1,\ldots, H_r$ such that for any $\xx\in\VV$,
\begin{align*}
f(\xx)=\sum_{k=1}^r H_k(\Delta_k(\xx)),
\end{align*}
where $\Delta_k$ is the principal minor of order $k$ with respect to $\left\{\cb_i\right\}_{i=1}^r$.
\end{theorem}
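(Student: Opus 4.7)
Setting $\xx=\ee$ in the functional equation yields $f(\ee)=0$. Because $\TT$ acts simply transitively on $\VV$ via $t\mapsto t\ee$, the assignment $\xx\leftrightarrow t_\xx$ is a bijection, so I would transfer $f$ to the function $g\colon\TT\to\RR$ defined by $g(t):=f(t\ee)$. Plugging $\xx=t\ee$ and $\yy=s\ee$ into the hypothesis and using $t_{s(t\ee)}=st$ (uniqueness of the triangular decomposition), the equation becomes
\[
g(s)+g(t)=g(s\cdot t)\quad\text{for all }s,t\in\TT.
\]
Symmetry in $s,t$ gives $g(st)=g(ts)$, so $g\colon\TT\to(\RR,+)$ is a group homomorphism and in particular vanishes on the commutator subgroup $[\TT,\TT]$.

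The central step is to determine the abelianization of $\TT$. Writing $\TT=N\cdot A$, where $A=\{\PP(\sum_l\beta_l\cb_l)\colon\beta_l>0\}$ is the abelian ``diagonal'' subgroup and $N\trianglelefteq\TT$ is generated by the Frobenius transformations $\tau_{\cb_i}(\zz)$ with $\zz\in\bigoplus_{j>i}\En_{ij}$, the inclusion $[\TT,\TT]\subseteq N$ is immediate. For the reverse inclusion I would compute, for $\zz\in\En_{ij}$ with $i<j$ and $a=\PP(\sum_l\beta_l\cb_l)\in A$,
\[
a\,\tau_{\cb_i}(\zz)\,a^{-1}=\tau_{\cb_i}\!\left(\tfrac{\beta_j}{\beta_i}\zz\right),
\]
which follows in the matrix algebras from the explicit Frobenius-matrix formula recalled in Section~2, and in general from the fact that $\PP(\sum_l\beta_l\cb_l)$ rescales $\En_{ij}$ by $\beta_i\beta_j$. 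Hence $[a,\tau_{\cb_i}(\zz)]=\tau_{\cb_i}((\beta_j/\beta_i-1)\zz)$; varying $\beta_j/\beta_i$ reaches every element of the one-parameter abelian subgroup $\{\tau_{\cb_i}(t\zz)\colon t\in\RR\}$, and these generate $N$. Thus $[\TT,\TT]=N$, so $\TT/[\TT,\TT]\cong A\cong(\RR_+^r,\cdot)$.

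To conclude, formula~\eqref{dett} shows that the map $\Phi\colon\TT\to(\RR_+^r,\cdot)$, $t\mapsto(\Delta_1(t\ee),\ldots,\Delta_r(t\ee))$, is a surjective group homomorphism whose kernel equals $N$ (indeed, all principal minors of $t\ee$ are $1$ iff $t$ has trivial diagonal part). Therefore $g$ factors as $g=\phi\circ\Phi$ for some $\phi\colon(\RR_+^r,\cdot)\to(\RR,+)$ satisfying $\phi(uv)=\phi(u)+\phi(v)$ componentwise. Defining $H_k(u):=\phi(1,\ldots,1,u,1,\ldots,1)$ (with $u$ in the $k$-th slot) produces $r$ generalized logarithmic functions, and the additivity of $\phi$ yields $\phi(u_1,\ldots,u_r)=\sum_{k=1}^rH_k(u_k)$, so that
\[
f(\xx)=g(t_\xx)=\phi(\Delta_1(\xx),\ldots,\Delta_r(\xx))=\sum_{k=1}^rH_k(\Delta_k(\xx)).
\]
The main obstacle in this plan is the commutator computation establishing $N\subseteq[\TT,\TT]$; it rests on the precise rescaling behavior of Frobenius transformations under conjugation by the diagonal subgroup, which in turn comes from the Peirce decomposition.
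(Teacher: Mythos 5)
Your argument is correct, and it is worth noting that the paper itself does not prove Theorem \ref{dettth} at all --- it only cites \cite{wC2013} --- so your proposal supplies a self-contained proof where the paper offers none. Your route is the natural group-theoretic one: transfer $f$ to a homomorphism $g\colon\TT\to(\RR,+)$ via the simply transitive action $t\mapsto t\ee$, observe that $g$ must kill the commutator subgroup, and identify the unipotent part $N$ as lying inside $[\TT,\TT]$ by the conjugation identity $a\,\tau_{\cb_i}(\zz)\,a^{-1}=\tau_{\cb_i}\bigl(\tfrac{\beta_j}{\beta_i}\zz\bigr)$, so that $g$ factors through the principal-minor homomorphism $\Phi$. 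All the ingredients check out: $\PP(\sum_l\beta_l\cb_l)$ does scale $\En_{ij}$ by $\beta_i\beta_j$ and $\cb_i$ by $\beta_i^2$, the Frobenius maps satisfy $\tau_{\cb_i}(\zz)\tau_{\cb_i}(\zz')=\tau_{\cb_i}(\zz+\zz')$, and $\ker\Phi=N$ follows from \eqref{dett} together with $\Delta_k(n\ee)=1$ for unipotent $n$. Two small points deserve explicit justification in a write-up: (i) the conjugation identity should not be derived only from the Frobenius-matrix picture, since the theorem covers the Lorentz cone and $S_3(\mathbb{O})$ as well --- the clean general argument is $g(\zz\Box\cb_i)g^{-1}=(g\zz)\Box(g^{*-1}\cb_i)$ for $g$ in the structure group, applied to the self-adjoint $g=\PP(\sum_l\beta_l\cb_l)$ and then exponentiated; and (ii) your opening claim that $[\TT,\TT]\subseteq N$ ``is immediate'' presupposes that $N$ is normal with abelian quotient, which you in fact only establish later via $N=\ker\Phi$ --- reordering fixes this, and in any case only the inclusion $N\subseteq[\TT,\TT]$ is actually needed. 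Compared with the direct approach one would extract from \cite{wC2013} (manipulating the functional equation along the explicit decomposition $t=\tau_{\cb_1}(\zz^{(1)})\cdots\tau_{\cb_{r-1}}(\zz^{(r-1)})\PP(\sum_i\alpha_i\cb_i)$ to show $f(t\ee)$ depends only on the $\alpha_i$), your commutator trick reaches the same conclusion --- vanishing on the unipotent part --- with noticeably less computation, at the price of invoking a little structure theory of $\TT$.
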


\begin{remark}\label{dettrem}
If we impose on $f$ in Theorem \ref{dettth} some mild conditions (eg. measurability), then there exists $s\in\RR^r$ such that for any $\xx\in\VV$,
$$f(\xx)=\log\Delta_s(\xx).$$
\end{remark}

We may now give the specification of Theorem \ref{betath}, when \eqref{wbeta} is known explicitly. 
For every generalized multiplication $\ww$ and $\widetilde{\ww}$, the family of generalized beta measures (as defined in \eqref{wbeta}) contains the beta laws (see Remark \ref{remX}). For $\ww=\widetilde{\ww} = \ww_1$, there are no other distributions, while for $\ww=\widetilde{\ww}=\ww_2$ generalized beta measures consist of the beta-Riesz distributions. It is an open question whether there is a generalized multiplication $\ww$ that leads to other probability measures in this family.

Define $\underline{1}:=(1,\ldots,1)\in\RR^r$. Recall that $\VV$ is an irreducible symmetric cone of rank $r$, $d$ is its Peirce constant and $\DD=\left\{\xx\in\VV\colon\ee-\xx\in\VV\right\}$.
\begin{theorem}[Characterization of beta and beta-Riesz distributions]\label{betathS}
Let $X$ and $Y$ be independent random variables valued in $\DD$ with continuous and strictly positive densities.  
Let additionally $\psi\colon\DD^2\to\DD^2$ be a mapping defined through 
\begin{align*}
\psi(\xx,\yy)=\left(\ee-\ww(\xx)\yy,\widetilde{\gw}(\ee-\ww(\xx)\yy)(\ee-\xx)\right),
\end{align*} 
where $\ww=\gw^{-1}$ and $\widetilde{\ww}=\widetilde{\gw}^{-1}$ are multiplication algorithms satisfying conditions $A-D$. Assume that components of vector $(U,V)=\psi(X,Y)$ are independent.

If 
\begin{enumerate}
\item $\ww(\xx)=\widetilde{\ww}(\xx)=\PP(\xx^{1/2})$, then there exist constants $p_i>\dim\VV/r-1$, $i=1,2,3$, such that 
\begin{align*}
X\sim \mathcal{B}(p_1+p_3,p_2)\quad\mbox{ and }\quad Y\sim \mathcal{B}(p_3,p_1),
\end{align*}
\item $\ww(\xx)=\widetilde{\ww}(\xx)=t_\xx$, then there exist vectors $s_i=(s_{i,j})_{j=1}^r$, $s_{i,j}>(j-1)d/2$, $i=1,2,3$, $j=1,\ldots,r$, such that 
\begin{align*}
X\sim \mathcal{BR}(s_1+s_3,s_2)\quad\mbox{ and }\quad Y\sim \mathcal{BR}(s_3,s_1),
\end{align*}
\item $\ww(\xx)=\PP(\xx^{1/2})$ and $\widetilde{\ww}(\xx)=t_\xx$ , then there exist constants $p_i>\dim\VV/r-1$, $i=1,3$ and vector $s_2=(s_{2,j})_{j=1}^r$, $s_{2,j}>(j-1)d/2$, such that 
\begin{align*}
X\sim \mathcal{BR}((p_1+p_3)\underline{1},s_2)\quad\mbox{ and }\quad Y\sim\mathcal{B}(p_3,p_1),
\end{align*}
\item $\ww(\xx)=t_\xx$ and $\widetilde{\ww}(\xx)=\PP(\xx^{1/2})$, then there exist constants $p_i>\dim\VV/r-1$, $i=1,2$ and vector $s_3=(s_{3,j})_{j=1}^r$, $s_{3,j}>(j-1)d/2$, such that 
\begin{align*}
X\sim \mathcal{BR}(p_1\underline{1}+s_3,p_2\underline{1})\quad\mbox{ and }\quad Y\sim\mathcal{BR}(s_3,p_1\underline{1}).
\end{align*}
\end{enumerate}
\end{theorem}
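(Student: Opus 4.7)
The plan is to invoke Theorem \ref{betath} to reduce the problem to identifying the three multiplicative functions $i,f,h$ appearing in \eqref{wbeta}, and then to apply the descriptions of $\ww_1$- and $\ww_2$-logarithmic functions supplied by Theorems \ref{detwth} and \ref{dettth} (together with Remark \ref{dettrem}) in each of the four cases. Since $X$ and $Y$ have continuous strictly positive densities on $\DD$, Theorem \ref{betath} applies and yields continuous strictly positive $i,f,h$ with $f_X$ and $f_Y$ of the form \eqref{wbeta}; equivalently, $h_1:=\log i$, $h_2:=\log f$ and $h_3:=\log h$ are continuous functions of the stated logarithmic types. Both basic algorithms satisfy $\we=\widetilde{\ww}_\ee=\mathrm{Id}$, because $\PP(\ee)=\mathrm{Id}$ and the only element of $\TT$ fixing $\ee$ is the identity, so the density of $Y$ in \eqref{wbeta} reduces to $c_Y\, h(\xx)\, i(\ee-\xx)$ in all four cases.

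In case (1) all three functions are $\ww_1$-logarithmic. Theorem \ref{detwth} gives $h_j(\xx)=H_j(\det\xx)$ for generalized logarithms $H_j$; continuity of $h_j$ together with the continuity and surjectivity of $\det\colon\VV\to(0,\infty)$ (e.g., via the rescaling $\xx\mapsto t^{1/r}\xx$) forces $H_j$ to be continuous, hence a scalar multiple of $\log$, so $h_j(\xx)=(p_j-\dim\VV/r)\log\det\xx$ for some $p_j\in\RR$. Plugging back into \eqref{wbeta}, the exponents of $\det\xx$ and $\det(\ee-\xx)$ collapse to give exactly the densities of $\mathcal{B}(p_1+p_3,p_2)$ for $X$ and $\mathcal{B}(p_3,p_1)$ for $Y$, with the constraints $p_j>\dim\VV/r-1$ dictated by integrability. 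Case (2) is analogous: each $h_j$ is $\ww_2$-logarithmic and continuous, hence by Theorem \ref{dettth} and Remark \ref{dettrem} of the form $\log\Delta_{s_j-(\dim\VV/r)\underline{1}}(\xx)$ for some $s_j\in\RR^r$; using $\Delta_s\Delta_t=\Delta_{s+t}$ and $(\det\xx)^\lambda=\Delta_{\lambda\underline{1}}(\xx)$ one reads off the beta-Riesz parameters $s_1+s_3,\,s_2,\,s_3,\,s_1$ as claimed, the conditions $s_{i,j}>(j-1)d/2$ again coming from integrability.

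The mixed cases (3) and (4) reuse the same arguments, the one new ingredient being that $h_1$ must be simultaneously $\ww_1$- and $\ww_2$-logarithmic. The $\ww_1$-constraint, by Theorem \ref{detwth} and continuity, already forces $h_1(\xx)=(p_1-\dim\VV/r)\log\det\xx=\log\Delta_{(p_1-\dim\VV/r)\underline{1}}(\xx)$, which automatically solves the $\ww_2$-equation since $\det\xx=\Delta_{\underline{1}}(\xx)$. The remaining $h_2$ and $h_3$ are each subject to a single-algorithm constraint and are identified exactly as in whichever of cases (1) or (2) matches that algorithm. Assembling via \eqref{wbeta} then gives the densities of statements (3) and (4); in each of them $Y$ inherits the pure-type distribution ($\mathcal{B}$ or $\mathcal{BR}$) corresponding to the algorithm $\ww$ attached to it via $\we=\mathrm{Id}$, while $X$ acquires a shape determined by $i\cdot h\cdot f(\ee-\cdot)$ and is read off as $\mathcal{BR}((p_1+p_3)\underline{1},s_2)$ or $\mathcal{BR}(p_1\underline{1}+s_3,p_2\underline{1})$.

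The substantive mathematics is all already packaged in Theorems \ref{betath}, \ref{detwth}, \ref{dettth} and Remark \ref{dettrem}, so what remains is essentially book-keeping. The one place requiring real care, and the main source of error I expect to have to guard against, is matching the $\dim\VV/r$-shifts between the parameterizations in \eqref{wbeta}, the definitions of $\mathcal{B}$ and $\mathcal{BR}$, and the natural parameters $p_j$, $s_j$; once this is done, the parameter inequalities in the four statements follow automatically from the fact that $f_X$ and $f_Y$ are honest probability densities, so integrability on $\DD$ selects exactly the admissible ranges.
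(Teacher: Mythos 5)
Your proposal is correct and follows essentially the same route as the paper: start from the conclusion of Theorem \ref{betath} (equation \eqref{hi}), identify each $h_j$ via Theorem \ref{detwth} (continuity turning the generalized logarithm into $\kappa\log\det$) or via Theorem \ref{dettth} with Remark \ref{dettrem}, and read off the parameters with integrability giving the stated ranges. Your explicit treatment of the mixed cases --- observing that the doubly logarithmic $h_1$ is already pinned down by its $\ww_1$-constraint --- is exactly the ``analogous'' argument the paper leaves implicit.
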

\begin{proof}
We start with \eqref{hi}. 
If $\ww(\xx)=\ww_1(\xx)=\PP(\xx^{1/2})$, then by Theorem \ref{detwth} we know that there exist constants $\kappa_i\in\RR$ such that $h_i(\xx)=\kappa_i\log\det\xx$, $i=1,2,3$. Thus $X$ follows $\mathcal{B}(p_1+p_3,p_2)$ distribution and $Y$ follows $\mathcal{B}(p_3,p_1)$ distribution, where $p_i=\kappa_i+\dim\VV/r>\dim\VV/r-1$, $i=1, 2,3$.

If, in turn, $\ww(\xx)=\ww_2(\xx)=t_\xx$, then by Theorem \ref{dettth} and Remark \ref{dettrem} we get the existence of vectors $t_i\in\RR^r$ such that $h_i(\xx)=\log\Delta_{t_i}(\xx)$, $i=1,2,3$. So $X$ follows $\mathcal{BR}(s_1+s_3,s_2)$ distribution and $Y$ follows $\mathcal{BR}(s_3,s_1)$ distribution, where $s_i=t_i+\dim\VV/r$, $i=1,2,3$ are such that $s_{i,j}>(j-1)d/2$, $i=1,2,3$, $j=1,\ldots,r$.

Points $(3)$ and $(4)$ are proved analogously.
\end{proof}

\section{Distributions invariant under the group of automorphisms}
In the famous Lukacs-Olkin-Rubin Theorem (see \cite{OlRu1964} for $\VV_+$ case, \cite{CaLe1996} for all irreducible symmetric cones and \cite[Remark 4.4]{BKLOR2014} for its density version), the following independence property was analyzed: assume $X$ and $Y$ are independent random variables valued in $\VV$ and $V=X+Y$ and $U=\gw(X+Y)X$ (here $\gw=\ww^{-1}$) are also independent (supplemented with some technical assumptions). If the distribution of $U$ is invariant under the group $K$ of automorphisms, that is $kU\stackrel{d}{=}U$ for any $k\in K$, then $X$ and $Y$ follow Wishart distribution with the same scale parameter, regardless of the choice of multiplication algorithm $\ww=\gw^{-1}$. In that case $U$ was beta distributed for any measurable division algorithm $\gw$. Similar approach in our case also leads to the characterization of beta distribution on $\VV$ (see Theorem \ref{betathK} below).

Function $f\colon\VV\to\RR$ is called $K$-invariant if $f(k\xx)=f(\xx)$ for any $k\in K$ and $\xx\in\VV$. We will need the following result of \cite{BKInf2014}, where compared to Theorem \ref{inform}, additional assumption of $K$-invariance is imposed on unknown functions.
\begin{theorem}\label{cor2}
Let $a, b, c, d$, $\ww, \widetilde{\ww}$ be as in Theorem \ref{inform}, but assume additionally that any two unknown functions are $K$-invariant. Then, there exist constants $\kappa_j$, $j=1,2,3$ and $C_i$, $i=1,\ldots,4$, such that for any $\xx\in\DD$,
\begin{align}\begin{split}\label{tryk}
a(\xx) & = (\kappa_1+\kappa_3)\log\det(\ee-\xx)+\kappa_2\log\det\xx+C_1, \\
b(\xx) & = \kappa_1\log\det(\ee-\xx)+\kappa_3\log\det\xx+C_2, \\
c(\xx) & = (\kappa_1+\kappa_2)\log\det(\ee-\xx)+\kappa_3\log\det\xx+C_3, \\
d(\xx) & = \kappa_1\log\det(\ee-\xx)+\kappa_2\log\det\xx+C_4,
\end{split}\end{align}
and $C_1+C_2 = C_3+C_4$.
\end{theorem}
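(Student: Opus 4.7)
The plan is to apply Theorem \ref{inform} to decompose $a, b, c, d$ in terms of continuous functions $h_1, h_2, h_3$ (each $\ww$- or $\widetilde{\ww}$-logarithmic), then use the two $K$-invariance hypotheses to force each $h_i$ to be $K$-invariant, and finally identify each $K$-invariant logarithmic function with $\kappa_i \log\det$. Two preliminary facts enable the $K$-invariance step. First, $\we = \ww(\ee)$ and $\widetilde{\ww}_\ee$ both lie in $K$: since $\we\ee = \ee$ and $\we \in G$, characterization \eqref{defk} gives $\we \in K$; in particular the substitutions $\yy \mapsto \we\yy$, $\yy \mapsto \widetilde{\ww}_\ee\yy$, and $\xx \mapsto \ee - \xx$ preserve $\DD$ and commute with $K$-action, so $K$-invariance of any of $a, b, c, d$ translates directly into $K$-invariance of an explicit combination of $h_i$'s on $\DD$. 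Second, a scaling identity: setting $\xx = \alpha\ee$ and $\yy \mapsto \we^{-1}\yy$ in the $\ww$-logarithmic equation and using homogeneity of $\ww$ (Condition A), one obtains $h_3(\alpha\yy) = h_3(\alpha\ee) + h_3(\yy)$ for $\alpha > 0$, $\yy \in \VV$, so $h_3(k\alpha\yy) - h_3(\alpha\yy) = h_3(k\yy) - h_3(\yy)$ is independent of $\alpha$; analogous identities hold for $h_1, h_2$. Also $h_i(\ee) = 0$ (set $\xx = \yy = \ee$).

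The main step is the isolation of $K$-invariance of each individual $h_i$. For concreteness assume $a$ and $b$ are $K$-invariant; the other five pairs are handled by exactly the same argument after relabelling. Then $h_1(\ee - \yy) + h_3(\yy)$ (from $b$, after substituting $\yy = \we\xx$) and $h_1(\ee - \xx) + h_2(\xx) + h_3(\ee - \xx)$ (from $a$) are $K$-invariant on $\DD$. Replacing $\yy$ by $\alpha\yy$ in the first relation, applying the scaling identity to the $h_3$-terms, and letting $\alpha \to 0^+$, continuity of $h_1$ at $\ee$ together with $h_1(\ee) = 0$ annihilates the $h_1$-contribution, yielding $h_3(k\yy) = h_3(\yy)$ for every $\yy \in \VV$. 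With $h_3$ now $K$-invariant, the analogous argument applied to the $a$-relation (using $h_3(\ee - k\alpha\xx) = h_3(k(\ee - \alpha\xx)) = h_3(\ee - \alpha\xx)$) gives $K$-invariance of $h_2$ on $\VV$. $K$-invariance of $h_1$ on $\DD$ then follows, and extends to $\VV$ by one further application of the scaling identity.

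Once each $h_i$ is $K$-invariant, I reduce the $\ww$- or $\widetilde{\ww}$-logarithmic equation for $h_i$ to the $\ww_1$-logarithmic one. Using the decomposition $\ww(\xx) = \PP(\xx^{1/2})k_\xx$ with $k_\xx \in K$ (valid for any multiplication algorithm) and substituting $\yy \mapsto k_\xx^{-1}\yy$, $K$-invariance of $h_i$ together with $\we k_\xx^{-1} \in K$ collapses the equation to $h_i(\xx) + h_i(\yy) = h_i(\PP(\xx^{1/2})\yy)$. Theorem \ref{detwth} then gives $h_i(\xx) = H_i(\det\xx)$ for a generalized logarithm $H_i$, and continuity forces $H_i(t) = \kappa_i\log t$. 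Substituting $h_i(\xx) = \kappa_i\log\det\xx$ into the Theorem \ref{inform} formulas, and noting that $\det(\we\yy) = \det(\widetilde{\ww}_\ee\yy) = \det\yy$ by \eqref{detw}, produces \eqref{tryk}; the relation $C_1 + C_2 = C_3 + C_4$ is inherited from Theorem \ref{inform}. The principal obstacle is the isolation of individual $K$-invariances in the second step, for which the scaling identity combined with continuity of $h_1$ at $\ee$ is the essential device.
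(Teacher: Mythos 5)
Your argument is sound, but note first that the paper does not actually prove Theorem~\ref{cor2}: it is imported verbatim from \cite{BKInf2014} (``We will need the following result of \cite{BKInf2014}\dots''), so there is no in-paper proof to match. What you have produced is a self-contained derivation of Theorem~\ref{cor2} from Theorem~\ref{inform} together with facts the paper does state: the characterization \eqref{defk} of $K$ (giving $\we\in K$ and the decomposition $\ww(\xx)=\PP(\xx^{1/2})k_\xx$ with $k_\xx=\PP(\xx^{1/2})^{-1}\ww(\xx)\in K$), the homogeneity Condition A, and Theorem~\ref{detwth}. I checked the key steps and they work: the scaling identity $h_3(\alpha\yy)=h_3(\alpha\ee)+h_3(\yy)$ follows from $\ww(\alpha\ee)=\alpha\we$ and the substitution $\yy\mapsto\we^{-1}\yy$; the conjugation $k\mapsto\we k\we^{-1}$ legitimately transfers $K$-invariance of $b$ to $K$-invariance of $\yy\mapsto h_1(\ee-\yy)+h_3(\yy)$; and the limit $\alpha\to0^+$ combined with continuity of $h_1$ at $\ee$ correctly isolates $K$-invariance of $h_3$, then $h_2$, then $h_1$, with the extension from $\DD$ to $\VV$ via scaling. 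The reduction of the $\ww$-logarithmic equation to the $\ww_1$-logarithmic one via $\yy\mapsto k_\xx^{-1}\yy$ and $\we k_\xx^{-1}\in K$ is also correct, and $\det(\ee-\we\xx)=\det(\we(\ee-\xx))=\det(\ee-\xx)$ closes the computation. One small overstatement: the claim that the other five choices of $K$-invariant pair are ``exactly the same argument after relabelling'' is not quite literal --- for the pairs $(a,d)$ and $(b,c)$ the $\alpha\to0^+$ device extracts the \emph{same} $h_i$ from both equations, so the order of deductions has to be rearranged (e.g.\ for $(a,d)$: get $h_2$, then $h_1$ on $\DD$ from $d$, then $h_3$ from $a$); the conclusion still follows in every case, and in the only case the paper actually uses (Theorem~\ref{betathK}) the invariant pair is $(a,b)$, which you treat in full. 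What your route buys is transparency: the reader sees exactly which structural hypotheses ($\we\in K$, homogeneity, continuity at $\ee$) force the collapse from generalized power functions to powers of the determinant, whereas the paper leaves this entirely to the external reference.
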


\begin{theorem}[Characterization of beta distribution]\label{betathK}
Let $X$ and $Y$ be independent random variables valued in $\DD$ with continuous and strictly positive densities. Assume additionally that the distributions of $X$ and $Y$ are invariant under the group $K$ of automorphisms. Let $\psi\colon\DD^2\to\DD^2$ be a mapping defined through 
\begin{align*}
\psi(\xx,\yy)=\left(\ee-\ww(\xx)\yy,\widetilde{\gw}(\ee-\ww(\xx)\yy)(\ee-\xx)\right),
\end{align*} 
where $\ww=\gw^{-1}$ and $\widetilde{\ww}=\widetilde{\gw}^{-1}$ are multiplication algorithms satisfying conditions $A-D$. If components of vector $(U,V)=\psi(X,Y)$ are independent, then there exist constants $p_i>\dim\VV/r-1$, $i=1,2,3$, such that $X\sim \mathcal{B}(p_1+p_3,p_2)$ and $Y\sim \mathcal{B}(p_3,p_1)$.
\end{theorem}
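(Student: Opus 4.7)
The plan is to repeat the reduction performed in the proof of Theorem \ref{betath} verbatim up to equation \eqref{A}, and then to invoke the $K$-invariant refinement (Theorem \ref{cor2}) of the fundamental equation of information in place of Theorem \ref{inform}. Thus, taking logarithms of the joint density formula \eqref{fuv} combined with the independence of $U$ and $V$, and performing the same change of variables $\ub=\ee-\yy$, $\vb=\widetilde{\gw}(\ee-\yy)\xx$, one obtains equation \eqref{A} on $\DD_0$ with $a,b,c,d$ defined as in the proof of Theorem \ref{betath}.

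The key new input is to verify that $a$ and $b$ are $K$-invariant, which is all that Theorem \ref{cor2} requires. For any $k\in K$, the defining property $k\ee=\ee$ from \eqref{defk} together with the linearity of $k$ gives $\ee-k\ub=k(\ee-\ub)$. Moreover $\det(k\xx)=\det\xx$ for $k\in K$: indeed $k\in G$, so the formula $\det(g\xx)=(\DDet g)^{r/\dim\VV}\det\xx$ applies, and $\DDet k=1$ because $K$ lies simultaneously in the orthogonal group of $\En$ (forcing $|\DDet k|=1$) and in $G$, the connected component of the identity in $G(\En)$ (forcing $\DDet k>0$). Together with the $K$-invariance of $f_X$ and $f_Y$, these two observations yield the $K$-invariance of $a$ and $b$ directly from their definitions; crucially, no analysis of the distributions of $U$ or $V$ is needed.

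Applying Theorem \ref{cor2} then provides real constants $\kappa_1,\kappa_2,\kappa_3$ and $C_1,C_2$ so that $a$ and $b$ take the explicit form \eqref{tryk}. Substituting back, $\log f_X(\xx)=a(\ee-\xx)+(\dim\VV/r)\log\det\xx$ and $\log f_Y(\xx)=b(\xx)$ yield densities proportional to
\begin{align*}
f_X(\xx)\propto(\det\xx)^{\kappa_1+\kappa_3+\dim\VV/r}\,\det(\ee-\xx)^{\kappa_2},\qquad f_Y(\xx)\propto(\det\xx)^{\kappa_3}\,\det(\ee-\xx)^{\kappa_1},
\end{align*}
on $\DD$. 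Setting $p_i:=\kappa_i+\dim\VV/r$ identifies these as the densities of $\mathcal{B}(p_1+p_3,p_2)$ and $\mathcal{B}(p_3,p_1)$ respectively, and the constraints $p_i>\dim\VV/r-1$ are forced by integrability on $\DD$.

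I expect the only mildly subtle step to be this transfer of $K$-invariance from the distributions of $X$ and $Y$ to the auxiliary functions $a$ and $b$, in particular the observation that $\det$ is $K$-invariant via the connectedness of $G$ combined with $K\subset O(\En)$. Everything else is mechanical once Theorem \ref{cor2} is available: the analytic work is already packaged in that theorem, and recognition of the beta densities on $\DD$ is immediate from \eqref{tryk}.
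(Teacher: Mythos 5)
Your proposal is correct and follows essentially the same route as the paper: reduce to equation \eqref{A} exactly as in Theorem \ref{betath}, observe that $a$ and $b$ inherit $K$-invariance from $f_X$ and $f_Y$ (the paper states this without the explicit verification via $k(\ee-\ub)=\ee-k\ub$ and $\DDet k=1$ that you supply), and conclude by Theorem \ref{cor2}. The extra details you give --- the $K$-invariance of $\det$ and the integrability argument pinning down $p_i>\dim\VV/r-1$ --- are correct and merely flesh out steps the paper leaves implicit.
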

\begin{proof}
The proof begins exactly the same as in Theorem \ref{betath}; we start with \eqref{A}. If distributions of $X$ and $Y$ are invariant under the group of automorphisms, then their densities are $K$-invariant functions, that is $f_X(k\xx)=f_X(\xx)$ and $f_Y(k\xx)=f_Y(\xx)$ for any $k\in K$ and $\xx\in\DD$. From this we conclude that $a(\ub)=\log\,f_X(\ee-\ub)-\frac{\dim\VV}{r}\log\det(\ee-\ub)$ and $b(\ub)= \log\,f_Y(\ub)$ are also $K$-invariant, thus by Theorem \ref{cor2} we get the assertion.
\end{proof}

\subsection*{Acknowledgment} I am thankful to J. Weso{\l}owski for drawing my attention to this problem and for helpful discussions. This research was partially supported by NCN Grant No. 2012/05/B/ST1/00554. 

\bibliographystyle{plainurl}

\bibliography{Bibl}

\end{document}